\newcommand{\bbV}{{\mathcal V}}
\newcommand{\bbR}{{\mathbb R}}
\newcommand{\bbW}{{\mathcal W}}
\newtheorem{observation}{Observation}
\title{On the Fermat-Weber Point of a Polygonal Chain and its Generalizations\thanks{This work was done when the author was an undergraduate student at the Indian Statistical Institute, Kolkata, India, with financial support from the Department of Science and Technology (DST), Govt. of. India, under the KVPY fellowship award.}}
\author{Bhaswar B. Bhattacharya}
\institute{Department of Statistics, Stanford University\\
{\tt bhaswar.bhattacharya@gmail.com}} 
\date{}
\begin{document}
\maketitle

\begin{abstract}In this paper, we study the properties of the Fermat-Weber point for a set of fixed points, whose arrangement coincides with the vertices of a regular polygonal chain. A $k$-chain of a regular $n$-gon is the segment
of the boundary of the regular $n$-gon formed by a
set of $k ~(\leq n)$ consecutive vertices of the regular $n$-gon. We show that for every odd positive integer $k$, there exists an integer $N(k)$, such that the Fermat-Weber point of a set of $k$ fixed points lying on the vertices
a $k$-chain of a $n$-gon coincides with a vertex of the chain whenever $n\geq N(k)$.
We also show that $\lceil\pi m(m+1)-\pi^2/4\rceil \leq N(k) \leq \lfloor\pi m(m+1)+1\rfloor$, where $k~(=2m+1)$ is any odd positive integer. We then extend this result to a more general family of point set, and give an $O(hk\log k)$ time algorithm for determining whether a given set of $k$ points, having $h$ points on the convex hull, belongs to such a family.\\

\noindent{\bf Keywords:} Computational geometry, Facility location, Fermat-Weber Problem, Optimization, Polygons.
\end{abstract}

\section{Introduction}
\label{sec:intro} The Fermat-Weber point $\bbW(S)$ of a set $S$ of
$n$ points $\{p_1, p_2, \ldots, p_n\}$ in $\bbR^d$ is the point $p$
which minimizes the sum $\sum_{i=1}^n d(p, p_i)$
\cite{drezner,kupitz,weber} where $d(\alpha, \beta)$ denotes the
Euclidean distance between the two points $\alpha$ and $\beta$. The
origin of this problem is attributed to the great Pierre de Fermat
(1601-1665) who, four centuries ago, asked to find a point which
minimizes the sum of Euclidean distances to three fixed points in
the plane. Around the year 1640, Evangelista Torricelli (1608-1647)
devised a geometrical construction for this problem. He showed that
the point minimizing the sum of distances from the three fixed
points is the point inside the triangle determined by the three
fixed points, at which every side of the triangle subtends an angle
of $2\pi/3$. This, however, is  true only when all the interior
angles of the triangle are at most $2\pi/3$. The so-called
complementary problem where one angle of the triangle can be greater
than $2\pi/3$ first appeared in Courant and Robbins' famous book
{\it What is mathematics?}. The solution of the complementary
problem, which states that the optimum always point coincides with
the obtuse vertex of the triangle, was correctly proved later by
Krarup and Vajda \cite{vadja}. The solution of the Fermat-Weber
problem with weights associated with each of the three points is
also known. The solution for positive weights can be found in the
book by Yaglom \cite{yaglom}. Jalal et al. \cite{jalal} completely
describes the solution when negative weights are also allowed.

The sum of Euclidean distances to four fixed points in the plane is
minimized at the point of intersection of the diagonals, when the
fixed points form a convex quadrangle. Otherwise, the sum is
minimized at the fixed point which coincides with the concave corner
of the quadrangle formed by the four fixed points. The convex case
was first solved by Fagnano \cite{fagano}, but the origin of the
solution for the other case remains unknown. Recently Plastria
\cite{plastria} gave new proofs for both the cases and generalized
these results to general metrics and norms.

Bajaj \cite{Bajaj} showed that even for 5 points, the coordinates of
the Fermat-Weber point may not be representable even if we allow
radicals, and that it is impossible to construct an optimal solution
by means of a ruler and a compass. There are only a few patterns
where the location Fermat-Weber point can be determined exactly. A
point set $S$ is said to form an equiangular configuration if there
exists a point $c \notin S$ and an ordering of the points in $S$
such that each two adjacent points form an angle of $2\pi/n$ with
respect to $c$. The Fermat-Weber point of an equiangular
configuration is the point $c$. Anderegg et al. \cite{sigmaangular}
presented a linear time algorithm to identify whether a given set of
points is in equiangular configuration.

However, it is difficult to exactly determine the Fermat-Weber point of a set of fixed points
unless it has a highly symmetric arrangement. As a matter of fact,
it is hard to find the Fermat-Weber point even if all fixed points lie
on a circle \cite{webercircle}.

Haldane \cite{haldane} proved that the Fermat-Weber point is unique
for any point set in $\bbR^d$ $(d \geq 2)$, unless the points all
lie on a single straight line. However, no algorithm for computing
the exact solution to the Fermat-Weber problem is known. The most
famous of all existing algorithms is the iterative algorithm due to
Weiszfeld \cite{weiszfeld}. Later Vardi and Zhang \cite{vardizhang}
gave a simple modification of this algorithm for solving the
Fermat-Weber location problem in $\bbR^d$ with extensions to more
general cost functions. Bose et al. \cite{probose} derived
$\epsilon$-approximation algorithms for the Fermat-Weber problem in
any fixed dimension, using geometric data structures. 

The relevance of the Fermat-Weber problem in location science was
first envisaged in 1909 by Alfred Weber \cite{weber}, when he
studied the locational optimization of a firm in a region.
Thereafter, the problem of minimizing the sum of distances from a
given set of fixed points is referred to as the Fermat-Weber
problem. Since then the min-sum criteria has been as an optimization
criterion in several facility location problems and extensive
research has been done on them over the years (\cite{drezner},
\cite{wes}). Recently, Burkard et al. \cite{fermatweberinverse}
introduced the inverse Fermat-Weber problem, where a set of $n$
points in the plane with nonnegative weights is given, and the
objective is to change the weights at minimum cost such that a
prespecified point in the plane becomes the Fermat-Weber point.
Carmi et al. \cite{carmi} studied the Fermat-Weber point of planar convex objects.
Their bounds were improved by Abu-Affash and Katz \cite{katz}, and later by Dumitrescu et al. \cite{dumitrescu}.
In a related paper, Dumitrescu et al. \cite{dumitrescustars} studied minimum length stars and Steiner stars 
of planar point sets. Apart from its relevance in facility location, the Fermat-Weber
problem also finds importance in statistics, especially in the
definition of medians \cite{ginigalvani} and quantiles \cite{q_pc}
of multivariate data. 

In this paper, we study the properties of the Fermat-Weber point for
a set of fixed points, whose arrangement coincides with the vertices
of a regular polygonal chain. A $k$-chain of a regular $n$-gon (or a
regular polygonal chain of length $k$) is the segment of the
boundary of the regular $n$-gon formed by a set of $k ~(\leq n)$
consecutive vertices of the regular $n$-gon. A $k$-chain of a
regular $n$-gon will be denoted by $C_n(k)$. $\bbW(C_n(k))$ denotes
the Fermat-Weber point of the set of $k$ fixed points which
coincides with the vertices of the chain $C_n(k)$. A chain is said
to be {\it empty} if it has no vertices, that is, $k=0$. In Section
\ref{wpc} we study some of the properties of $\bbW(C_n(k))$. Observe
that when a vertex of a chain $C_n(k)$ is deleted, we get two
smaller chains of the same regular $n$-gon, one of which may be
empty. Now, if $k$ is an odd integer, there exists a vertex of the
chain $C_n(k)$, which when deleted gives two identical smaller
chains. We call this vertex the {\it root vertex} of the chain. We
show that for every odd positive integer $k$, there exists an
integer $N(k)$ such that $\bbW(C_n(k))$, coincides with the root
vertex of $C_n(k)$, whenever $n\geq N(k)$. This can be thought of as
an extension of Courant and Robbins' complementary problem on
triangles. In Section \ref{algorithm} we prove that $\lceil \pi
m(m+1)-\pi^2/4 \rceil \leq N(k) \leq \lceil \pi m(m+1)+1\rceil$,
where $k=2m+1~(m\geq 1)$, is any odd positive integer. In Section
\ref{sec:extension}, we extend this result to a more general family
of point set. We also present an $O(hk\log k)$ time algorithm for
determining whether a given set of $k$ points, having $h$ points on
the convex hull, belongs to such a family. Finally, in Section
\ref{conclusions} we summarize our work and give some directions for
future work.

\section{Fermat-Weber Point of Polygonal Chains}
\label{wpc}

In this section we prove various properties of the Fermat-Weber point of a set of fixed points lying on the vertices of a polygonal chain.
We denote by $\bbV(C_n(k))$ the set of vertices of the chain $C_n(k)$.
In the following we shall assume that the vertices of the chain $C_n(k)$ lie on the circumference
of a unit circle with center at the point $o$, because the Fermat-Weber point of a set of fixed points
remains invariant under uniform scaling.

\begin{figure*}[h]
\centering
\begin{minipage}[c]{0.5\textwidth}
\centering
\includegraphics[width=2.25in]
    {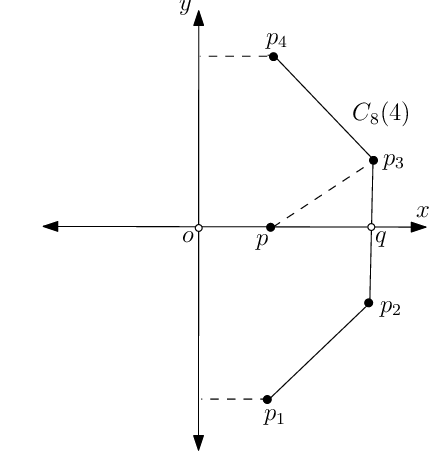}\\
{\small (a)}\\
\end{minipage}%
\begin{minipage}[c]{0.5\textwidth}
\centering
\includegraphics[width=2.25in]
    {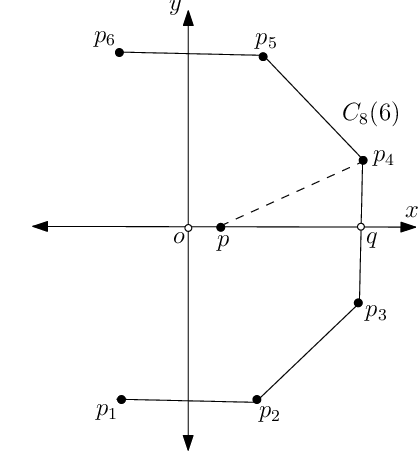}\\
{\small (b)}\\
\end{minipage}
\caption{Even Polygonal Chains: (a) $C_8(4)$: $4$-chain of a regular 8-gon, (b) $C_8(6)$: $6$-chain of a regular 8-gon.}
\label{fig1}
\end{figure*}

Now, depending on whether $k$ is even or odd we have the following two cases:

\begin{description}
\item[{\it Case} 1:]$k =2m$, $m\geq 1$. Let $\bbV(C_n(k))=\{p_1, p_2, \ldots, p_k\}$ be the vertices of
the chain taken in the counter-clockwise direction starting with the
lowermost vertex of the chain, as shown in Figure \ref{fig1}.
Observe that the chain is symmetric about the line $oq$, where $o$
is the circumcenter of $\bbV(C_n(k))$ and $q$  is the midpoint of
the line segment $p_mp_{m+1}$. We call the line $oq$ the {\it line
of symmetry} of the chain. The uniqueness of the Fermat-Weber point
now implies that $\bbW(C_n(k))$ must lie on this line. Consider the
rectangular coordinate system with origin at the point $o$ and $oq$
as the horizontal axis. Let $p:=(x, 0)$, $x > 0$, be a point on the
horizontal axis. The sum of distances from the point $p$ to the
vertices of $C_n(k)$ is then given by
\begin{equation}
\psi(k,n,x)= \sum_{p_i\in\bbV(C_n(k))}d(x, p_i)=\sum_{i=1}^{m}2\sqrt{x^2-2\mu_i(n) x+1}.
\label{eq:case1}
\end{equation}
where $\mu_i(n)=\cos((2i-1)\pi/n)$. The point $p_0$ on the line
segment $\overline{oq}$, where $\psi(k, n, x)$ is minimized is the
location of $\bbW(C_n(k))$. We denote by $|\bbW(C_n(k))|$ the
distance of the point $p_0$ from the origin $o$.


\begin{figure*}[h]
\centering
\begin{minipage}[c]{0.5\textwidth}
\centering
\includegraphics[width=2.3in]
    {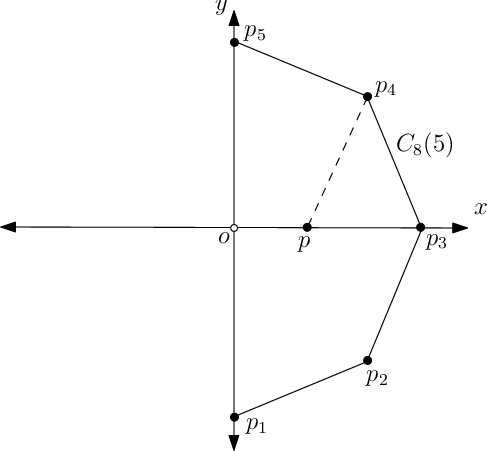}\\
{\small (a)}\\
\end{minipage}%
\begin{minipage}[c]{0.5\textwidth}
\centering
\includegraphics[width=2.3in]
    {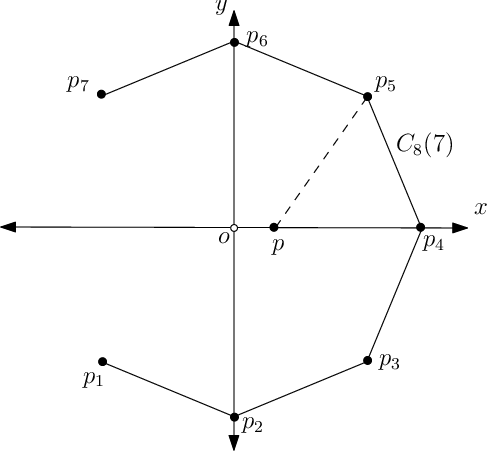}\\
{\small (b)}\\
\end{minipage}
\caption{Odd Polygonal Chains: (a) $C_8(5)$: $5$-chain of a regular 8-gon, (b) $C_8(7)$: $7$-chain of a regular 8-gon.}
\label{fig2}
\end{figure*}

\item[{\it Case} 2:]$k=2m+1$, $m\geq 1$. Let $\bbV(C_n(k))=\{p_1, p_2, \ldots, p_k\}$ be the vertices of
the chain taken in the counter-clockwise direction starting with the
lowermost vertex of the chain, as shown in Figure \ref{fig2}. In
this case, the chain is symmetric about the line $op_{m+1}$, which
then implies that $\bbW(C_n(k))$ must lie on this line. We call the
line $op_{m+1}$ the {\it line of symmetry} of the chain, and the
vertex $p_{m+1}$ the {\it root vertex} of the chain. Consider the
rectangular coordinate system with the circumcenter $o$ of
$\bbV(C_n(k))$ as the origin and the line $op_{m+1}$ as the
horizontal axis. If $p:=(x,0)$, $x\in[0, 1]$, is a point on the
$x$-axis, sum of distances from $p$ to the vertices of the $C_n(k)$
is given by
\begin{equation}
\psi(k, n, x)=  \sum_{p_i\in\bbV(C_n(k))} d(x, p_i)=1-x + \sum_{i=1}^{m}2\sqrt{x^2-2x\lambda_i(n)+1}
\label{eq:case2}
\end{equation}
where $\lambda_i(n)=\cos (2i\pi/n)$. As before, the point $p_0$ on
the line segment $\overline{op_{m+1}}$ where $\psi(k, n, x)$ is
minimized is the location of $\bbW(C_n(k))$. We denote by
$|\bbW(C_n(k)|$ the distance of the point $p_0$ from the origin $o$.

\end{description}

Henceforth, we shall always consider the coordinate system described above while doing any
computation with polygonal chains.

We begin with the following well known observation about the function $\psi(k, n, x)$, which will be repeatedly used in the proofs of
the subsequent results.

\begin{observation}For positive integers $n, k$ such that $n\geq k$ and $k\geq
3$, the objective function  $\psi(k, n, x)$ is strictly convex for
all $x\in[0, 1]$. \label{obs:convex}
\end{observation}
\begin{proof}The double derivative of $\psi(k, n, x)$ for all $x\in [0,1]$ is $$\psi''_x(k, n, x)=\frac{\partial^2}{\partial x^2}\psi(k, n, x)=\sum_{\bbV(C_n(k))}\frac{1-\beta_i^2(n)}{[x^2-2\beta_i(n)x+1]^{3/2}}.$$
where $\beta_i(n)=\lambda_i(n)$ when $k$ is odd and $\beta_i(n)=\mu_i(n)$ when $k$ is even. The result now follows from the fact that $\psi''_x(k, n, x)$ is positive for all $x\in [0, 1]$.
\hfill $\Box$ \end{proof}

Now, we fix a value of $k$ and study the variation in the
Fermat-Weber point of $C_n(k)$ as $n$ varies. Table
\ref{table:4chain} lists the location of the Fermat-Weber points and
the corresponding values of the objective function for $C_n(4)$, for
all $n\leq 18$ varies. Table \ref{table:5chain} lists the same for
the chain $C_n(5)$. These values were obtained by numerically
solving the equation $\frac{\partial}{\partial x}\psi(k, n, x)=0$
using Mathematica 4.0.

\begin{table}
\small
\hspace{0.003\textwidth}
\begin{minipage}{0.46\textwidth}
\small
\centering
\caption{$\bbW(C_n(4))$ for $n\leq 18$}
\begin{tabular}{|c|c|c|}
\hline
$n$ & $\psi(4, n, |\bbW(C_n(4))|)$ & $|\bbW(C_n(4)|$\\
\hline
4 & 4.00000 & 0.000000\\
\hline
5 & 3.80423    &  0.381966\\
\hline
6 & 3.4641  &    0.577352\\
\hline
7 &3.12733   &   0.692021\\
\hline
8 & 2.82843   &   0.765369\\
\hline
9 & 2.57115    &  0.815208\\
\hline
10 & 2.35114    &  0.85065\\
\hline
11 & 2.16256   &   0.876769\\
\hline
12 & 2.00000   &   0.896575\\
\hline
13 & 1.85889   &   0.911956\\
\hline
14 & 1.73553    &  0.924139\\
\hline
15 & 1.62695    &  0.933955\\
\hline
16 & 1.53073  &    0.941979\\
\hline
17 & 1.44497   &   0.948624\\
\hline
18 & 1.36808   &   0.954189\\
\hline
\end{tabular}
\label{table:4chain}
\end{minipage}
\hspace{0.033\textwidth}
\begin{minipage}{0.46\textwidth}
\small
\centering
\caption{$\bbW(C_n(5))$ for $n\leq 19$}
\begin{tabular}{|c|c|c|}
\hline
$n$ & $\psi(5, n, |\bbW(C_n(5))|)$ & $|\bbW(C_n(5)|$ \\
\hline
5 & 5.00000 & 0.000000\\
\hline
6 &  4.83419 & 0.330454\\
\hline
7  & 4.50791  &   0.534378\\
\hline
8 & 4.15356  &    0.667873\\
\hline
9 & 3.81793  &   0.759008\\
\hline
10 & 3.51502  &   0.82332\\
\hline
11 & 3.2466   &  0.869949\\
\hline
12 & 3.01013  &   0.904536\\
\hline
13 & 2.80181    &  0.930659\\
\hline
14 & 2.61783   &  0.950717\\
\hline
15 & 2.45470   &  0.966323\\
\hline
16 & 2.30942   &  0.978603\\
\hline
17 & 2.17944  &   0.98836\\
\hline
18 & 2.06261   &  0.996175\\
\hline
19 & 1.95718   &  1.00000\\
\hline
\end{tabular}
\label{table:5chain}
\end{minipage}
\end{table}

From the values listed these two tables it can be observed that the
distance of $\mathcal W(C_n(k))$ from the circumcenter $o$ increases
as $n$ increases. Hence, we have the following observation:

\begin{observation}If $n_1, n_2, k$ are positive integers, such that $k\leq n_1< n_2$, then
\begin{description}
\item[{\it (i)}]$\psi(k, n_1, |\bbW(C_{n_1}(k)|)> \psi(k, n_2, |\bbW(C_{n_2}(k)|)$,
\item[{\it (ii)}]$|\bbW(C_{n_1}(k)|<|\bbW(C_{n_2}(k))|$, whenever $|\bbW(C_{n_1}(k)|< 1$ and $|\bbW(C_{n_2}(k)|<1$.
\end{description}
\label{ob:ob1}
\end{observation}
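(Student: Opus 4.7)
\noindent\textit{Proof plan.}
The plan is to derive both parts from two monotonicity statements about $\psi(k,n,x)$ in $n$ (with $k$ and $x$ held fixed), combined with the strict convexity of Observation~\ref{obs:convex} and the first-order optimality condition.

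First I would show that for any fixed $x>0$, the objective $\psi(k,n,x)$ is strictly decreasing in the integer $n$, for $n\geq k$. In both (\ref{eq:case1}) and (\ref{eq:case2}) the dependence on $n$ enters only through the cosines $\beta_i(n)\in\{\mu_i(n),\lambda_i(n)\}$, each of which strictly increases in $n$ for $i\leq m$, since the relevant angle strictly shrinks inside $(0,\pi)$. Differentiating $\sqrt{x^2-2\beta x+1}$ in $\beta$ gives $-x/\sqrt{x^2-2\beta x+1}$, which is negative for $x>0$, so every summand decreases strictly in $\beta$ and hence in $n$.

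The technical heart of the argument is a parallel computation for the derivative $\psi'_x(k,n,x)$. A direct differentiation yields
$$\frac{\partial}{\partial\beta}\frac{x-\beta}{\sqrt{x^2-2\beta x+1}} = \frac{x\beta-1}{(x^2-2\beta x+1)^{3/2}},$$
which is strictly negative whenever $x\in(0,1)$ and $\beta\in(-1,1)$. Summing over $i$ shows that $\psi'_x(k,n,x)$ is strictly decreasing in $n$ for each such $x$.

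Given these two monotonicity facts, part~(i) follows from the chain
$$\psi(k,n_2,x_2^*) \leq \psi(k,n_2,x_1^*) < \psi(k,n_1,x_1^*),$$
where $x_j^* := |\bbW(C_{n_j}(k))|$; the second inequality is the first monotonicity statement, valid as long as $x_1^* > 0$. The only degenerate case $x_1^*=0$ is easily inspected: the closed-form expression for $\psi'_x(k,n,0)$ (the sum reduces to $-1-2\sum_{i=1}^m\lambda_i(n)$ in the odd case, and analogously in the even case) vanishes precisely at $n=k$, i.e.~for the full regular $n$-gon whose Fermat--Weber point is the circumcenter by symmetry. For $n_2>k$ the same formula gives $\psi'_x(k,n_2,0)<0$, hence $x_2^*>0$, and strict convexity yields $\psi(k,n_2,x_2^*)<\psi(k,n_2,0)=k=\psi(k,n_1,0)$.

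For part~(ii), with both $x_j^*<1$, the case $x_1^*=0$ is immediate from the same edge-case analysis. If $x_1^*\in(0,1)$, the first-order condition gives $\psi'_x(k,n_1,x_1^*)=0$, and the second monotonicity statement yields $\psi'_x(k,n_2,x_1^*)<0$; strict convexity of $\psi(k,n_2,\cdot)$ then forces its unique minimizer $x_2^*$ to satisfy $x_2^*>x_1^*$.

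The main obstacle is the algebraic identity for the $\beta$-derivative of the summand of $\psi'_x$; once it simplifies to the tidy form $(x\beta-1)/(\cdot)^{3/2}$ with an unambiguous sign on $(0,1)\times(-1,1)$, everything else is a clean combination of term-by-term monotonicity, convexity, and the first-order optimality condition, with only the $x_1^*=0$ corner needing a line of separate treatment.
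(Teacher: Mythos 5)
Your proposal is correct and follows essentially the same route as the paper: part (i) from the monotonicity of $\psi(k,n,x)$ in $n$ plus optimality of $|\bbW(C_{n_2}(k))|$, and part (ii) from the monotonicity of $\psi'_x(k,n,x)$ in $n$ combined with the first-order condition and convexity. You are in fact somewhat more careful than the paper, which asserts both monotonicity claims without the $\beta$-derivative computations you supply and does not separately treat the degenerate case $|\bbW(C_{n_1}(k))|=0$ (where $\psi(k,n,0)=k$ is independent of $n$); your handling of that corner is a genuine, if minor, improvement.
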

\begin{proof}Since $\cos\theta$ is a decreasing function for $\theta\in [0, \pi]$, it follows that if
$n_1 < n_2$, then $\mu_i(n_1) < \mu_i(n_2)$ and $\lambda_i(n_1) < \lambda_i(n_2)$, for every fixed $i$. Equations (\ref{eq:case1}) and (\ref{eq:case2}) now imply that if $n_1< n_2$, then $\psi(k, n_1, x)> \psi(k, n_2, x)$ for all
$x\in [0, 1]$. In particular,
$$\psi(k, n_1, |\bbW(C_{n_1}(k)|)\geq \psi(k, n_2, |\bbW(C_{n_1}(k)|)> \psi(k, n_2, |\bbW(C_{n_2}(k)|),$$
which proves the first part.

Next, let $\psi'(k, n, x)=\frac{\partial}{\partial x}\psi(k, n, x)$.
Now, since $|\bbW(C_{n_1}(k)|< 1$ and $|\bbW(C_{n_2}(k)|<1$, the
minimum of the function $\psi$ lies in the interval $[0, 1)$. This
implies that $\psi'(k, n_1, |\bbW(C_{n_1}(k))|)=\psi'(k, n_2,
|\bbW(C_{n_2}(k))|)=0$. It is easy to see that for any fixed values
of $k$ and $x$, $\psi'(k, n, x)$ is a decreasing function of of $n$.
Therefore, $\psi'(k, n_2, |\bbW(C_{n_1}(k))|)<\psi'(k, n_1,
|\bbW(C_{n_1}(k))|)=0$, and $\psi'(k, n_2, |\bbW(C_{n_2}(k))|)=0$.
Now, since $\psi(k, n, x)$ is convex for all $x\in [0, 1]$, the
derivative $\psi'(k, n, x)$ must be non-decreasing in $x$ in the
interval $[0, 1]$. This implies that
$|\bbW(C_{n_1}(k))|<|\bbW(C_{n_2}(k))|$.
\hfill $\Box$ \end{proof}

From the solution of the complementary problem \cite{vadja} we
already know that the Fermat-Weber point of a 3-chain coincides with
the root vertex of $C_n(3)$ if $n\geq 6$. This fact and the values
in Table \ref{table:5chain}  motivate the formulation of the
following lemma.

\begin{lemma}
For every odd positive integer $k=2m+1~(m\geq 1)$, $\bbW(C_n(k))$
coincides with the root vertex of the chain $C_n(k)$, if and only if
$2\sum_{i=1}^m\sin (i\pi/n)-1\leq 0$. \label{lemma}
\end{lemma}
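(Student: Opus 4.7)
The plan is to reduce the question ``is $\bbW(C_n(k))$ the root vertex $p_{m+1}$?'' to a one-sided derivative condition at $x=1$, using the strict convexity of $\psi(k,n,x)$ from Observation~\ref{obs:convex}. Since the chain is symmetric about $op_{m+1}$ and the Fermat-Weber point is unique, $\bbW(C_n(k))$ lies on the horizontal axis at some point $(x_0,0)$. Formula (\ref{eq:case2}) represents $\psi$ only on $[0,1]$, but extending its first summand $1-x$ to $|1-x|$ produces a convex function $\tilde\psi(k,n,x)$ on all of $\bbR$ that equals the total distance from $(x,0)$ to the chain's vertices, so $x_0$ is its global minimizer. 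The root vertex sits at $x=1$, i.e.\ at the boundary of the two smooth pieces of $\tilde\psi$.

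Next I would compute the left derivative of $\tilde\psi$ at $x=1$ directly from the formula for $\psi$. Straightforward differentiation yields
\[
\psi'(k,n,1)=-1+\sum_{i=1}^m\frac{2(1-\lambda_i(n))}{\sqrt{2-2\lambda_i(n)}}=-1+\sum_{i=1}^m\sqrt{2-2\cos(2i\pi/n)}.
\]
Applying the half-angle identity $\sqrt{2-2\cos\theta}=2\sin(\theta/2)$, which is valid for $\theta=2i\pi/n\in(0,\pi)$ since $1\le i\le m$ and $n\ge 2m+1$, each summand collapses to $2\sin(i\pi/n)$. Hence the left derivative of $\tilde\psi$ at $x=1$ equals $2\sum_{i=1}^m\sin(i\pi/n)-1$, and the right derivative (computed analogously from the $|1-x|$ piece) equals $1+2\sum_{i=1}^m\sin(i\pi/n)$, which is strictly positive.

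The lemma then follows from the standard convex-analysis fact that $x=1$ minimizes a convex function iff its left derivative there is $\le 0$ and its right derivative is $\ge 0$. The right-derivative inequality is automatic, and the left-derivative inequality reads exactly $2\sum_{i=1}^m\sin(i\pi/n)-1\le 0$. I do not expect any serious obstacle; the only delicate point is handling the kink of $\tilde\psi$ at the root vertex, which the one-sided-derivative/convexity framework dispatches cleanly. Everything else is the single trigonometric simplification above.
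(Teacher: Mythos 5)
Your proof is correct and follows essentially the same route as the paper: both reduce the statement to the sign of the derivative of the distance-sum at $x=1$ via convexity of $\psi(k,n,x)$, with the identical trigonometric evaluation $\psi'(k,n,1)=2\sum_{i=1}^m\sin(i\pi/n)-1$. The only cosmetic difference is that you extend $\psi$ to all of $\bbR$ via $|1-x|$ and invoke the two-sided subdifferential condition at the kink, whereas the paper stays on $[0,1]$ and argues that a convex function minimized at the right endpoint of an interval must be non-increasing, so that only the left derivative condition appears.
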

\begin{proof}($\Rightarrow$) If $\bbW(C_n(k))$ coincides with the root vertex of the chain $C_n(k)$, the
objective function $\psi(k, n, x)$ is minimized at the point $x=1$, for all $x\in [0, 1]$.
This implies that $\psi(k, n, x)$ must be non-increasing on the interval $[0, 1]$, because
$\psi(k, n, x)$ is convex on the interval $[0, 1]$. Therefore,
$$\psi'(k, n, x)=\frac{\partial}{\partial x}\psi(k, n, x)
=2\sum_{i=1}^m\frac{x-\lambda_i(n)}{\sqrt{x^2-2x\lambda_i(n)+1}}-1\leq 0 ~~\forall ~x\in [0, 1].$$
where $\lambda_i(n)=\cos 2(i\pi/n)$. This implies that $\psi'(k, n, 1)=2\sum_{i=1}^m\sin (i\pi/n)-1\leq 0$.

($\Leftarrow$) We know that $\psi'(k, n, 1)=2\sum_{i=1}^m\sin
(i\pi/n)-1\leq 0$. This implies that $\psi'(k, n, x)\leq 0$ for all
$x\in [0, 1]$, because $\psi'(k, n, x)$ is non-decreasing in $x$ on
the interval $[0, 1]$, by the convexity of $\psi$ on $[0, 1]$.
Therefore, $\psi(k, n, x)$ is non-increasing on the interval $[0,
1]$ and is minimized at the endpoint $x=1$, that is, $\bbW(C_n(k))$
coincides with the root vertex of the chain $C_n(k)$.
\hfill $\Box$ \end{proof}

Let us denote $f(n)=2\sum_{i=1}^m\sin(i\pi/n)-1$. Observe that as $n$ increases, $f(n)$ decreases, and if $n$ is large enough $f(n)$ becomes non-positive, since $\lim_{n\rightarrow \infty}f(n)=-1$. Therefore, the condition $f(n)\leq 0$ holds whenever $n$ is sufficiently large, and the following theorem is
immediate.

\begin{theorem}
For every odd positive integer $k=2m+1(m\geq 1)$, there exists a
smallest integer $N(k)$ such that for all $n\geq N(k)$,
$\bbW(C_n(k))$ coincides with the root vertex of $C_n(k)$. Moreover,
$N(k)=\min\{t \in \mathbb N: 2\sum_{i=1}^m\sin (i\pi/t)-1\leq 0\}$.
\hfill $\Box$ \label{theorem}
\end{theorem}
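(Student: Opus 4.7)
The plan is to read this theorem as a direct corollary of the preceding Lemma together with a monotonicity argument on the function
\[
f(n) \;=\; 2\sum_{i=1}^{m}\sin\!\bigl(i\pi/n\bigr)-1.
\]
By the Lemma, $\bbW(C_n(k))$ coincides with the root vertex of $C_n(k)$ if and only if $f(n)\leq 0$. So the theorem reduces to showing that the set $T_k := \{t\in\mathbb{N} : t\geq k,\ f(t)\leq 0\}$ is non-empty and is upward-closed, i.e.\ equals $\{t : t\geq N(k)\}$ for some smallest $N(k)$.

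First I would verify non-emptiness by taking $n\to\infty$: for each fixed $i\in\{1,\dots,m\}$, $\sin(i\pi/n)\to 0$, hence $f(n)\to -1<0$. Thus $f(n)\leq 0$ for all sufficiently large $n$, so $T_k\neq\emptyset$ and admits a minimum, which I define to be $N(k)$. By construction, $N(k)=\inf\{t\in\mathbb{N} : 2\sum_{i=1}^{m}\sin(i\pi/t)-1\leq 0\}$, which is exactly the closed-form expression asserted in the statement.

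Next I would establish the upward-closedness: for $n\geq k=2m+1$ and each $i\in\{1,\dots,m\}$, the argument $i\pi/n$ satisfies $0<i\pi/n\leq m\pi/(2m+1)<\pi/2$, and $\sin$ is strictly increasing on $[0,\pi/2]$. Since $i\pi/n$ is strictly decreasing in $n$, the function $n\mapsto \sin(i\pi/n)$ is strictly decreasing on $\{n : n\geq k\}$, so the sum $f(n)$ is strictly decreasing in $n$ on that range. Consequently, once $f(N(k))\leq 0$, we have $f(n)\leq 0$ for every $n\geq N(k)$, and the Lemma then yields $\bbW(C_n(k))=$ root vertex of $C_n(k)$ for all such $n$.

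There is no serious obstacle in this proof; the whole content sits in the Lemma, and the only thing to be careful about is the simple but necessary monotonicity check that keeps the arguments $i\pi/n$ inside $(0,\pi/2)$ so that $\sin$ is well-behaved. I would close with the remark that the same argument pins $N(k)$ down as the infimum described in the statement, which is precisely the quantity whose computation is analyzed in the next section.
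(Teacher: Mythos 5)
Your proposal is correct and follows essentially the same route as the paper, which also derives the theorem immediately from Lemma~\ref{lemma} by noting that $2\sum_{i=1}^m\sin(i\pi/n)-1\leq 0$ for all sufficiently large $n$. The one thing you add is the explicit check that $f(n)$ is decreasing in $n$ on $\{n\geq k\}$ (so the set of valid $n$ is upward-closed and a smallest $N(k)$ genuinely exists); the paper leaves this step implicit, and your version is the more careful one.
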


Theorem \ref{theorem} can be viewed as an extension of the complementary problem of Courant and Robbins. It follows from the result of the complementary problem that $N(3)=6$, since the interior angles of a regular hexagon are $2\pi/3$. Theorem \ref{theorem} asserts that $N(k)$ can be determined by checking the sign of $2\sum_{i=1}^m\sin (i\pi/n)-1$ at all the integers till the first time it becomes non-positive. The values $N(k)$ for some small values of $k$ are shown in Table \ref{table:oddtor}.\\



\small
\begin{table*}[h]
\small
\centering
\caption{$N(k)$ for some small values of $k$}
\begin{tabular}{|c|c|c|c|c|c|c|c|c|c|c|c|c|c|}
\hline
\small ~$k$~&~3~&~5~&~7~&~9~&~11~&~13~&~15~&~17~&~19~&~21~&~23~&~25\\
\hline
\small ~$N(k)$~&~6~&~19~& 38&63 & 94& 132&176 & 226&283&346&415&490\\
\hline
\end{tabular}
\label{table:oddtor}
\end{table*}

\normalsize \noindent{\it Note on Even Chains} : Consider a
$k$-chain $C_n(k)$, where $k=2m~(m\geq 1)$ is an even integer. Let
$q$ be the point where the line of symmetry of $C_n(k)$ intersects
the boundary of the chain. As in the case of odd chains, one might
conjecture that $\bbW(C_n(k))$ coincides with $q$ if $n$ is
sufficiently large. This, however, is not true. The coordinates of
the point $q$ are $(\mu_1(n), 0)$, where $\mu_1(n)=\cos(\pi/n)$. It
now follows from Equation (\ref{eq:case1}) that
\begin{equation}
\psi'(k, n, x)=\frac{\partial}{\partial x}\psi(k, n, x)
=2\sum_{i=1}^m\frac{x-\mu_i(n)}{\sqrt{x^2-2x\mu_i(n)+1}} ~~\forall ~x\in [0, \mu_1(n)].
\label{eq:remark_even}
\end{equation}
Note that for every fixed $n$, $\mu_i(n)\leq \pi$, for all $i \in \{1, 2, \ldots, m\}$. Since, $\cos\theta$ is a decreasing function for $\theta\in[0, \pi]$, we have $\mu_1(n)> \mu_i(n)$, for $i\in\{2, \ldots, m\}$. Equation (\ref{eq:remark_even}) now implies that $\psi'(k, n, \mu_1(n))>0$, for every fixed $n$. From the convexity of the function $\psi$ proved in Observation \ref{obs:convex}, we now conclude that $|\bbW(C_n(k))|<\mu_1(n)$, for every fixed $n$.  Therefore, as $n$ increases the Fermat-Weber point of the even chain $C_n(k)$ gradually approaches the point $q$, but it never actually coincides with $q$ for any finite value of $n$. This implies that $N(2m)=\infty$, for $m\geq 2$, and illustrates the impossibility of a result analogous to Theorem \ref{theorem} for even $k$-chains.

It is in fact the root vertex, which dominates the location of $\bbW(C_n(k))$, when $k$ is odd, by
pulling it towards itself as the value of $n$ increases.

\section{Determination of $N(k)$}
\label{algorithm}

In this section we determine bounds on the number $N(k)$ and propose
an algorithm for determining it.

At first, we have the following observation:

\begin{observation}For every odd positive integer $k~(=2m+1)\geq 3$, we have $N(k)\geq m(m+1)$.
\label{obs:N(k)_2k}
\end{observation}

\begin{proof}Observe that $f(m(m+1))=2\sum_{i=1}^{m}\sin (i\pi/(m(m+1)))-1$. Now, using the fact that for $\theta\in [0, \pi]$, $\sin \theta\geq \theta-\theta^3/6$, we get
\begin{eqnarray}
f(m(m+1))&=& 2\sum_{i=1}^{m}\sin \frac{i\pi}{m(m+1)}-1\nonumber\\
 &\geq & 2\sum_{i=1}^{m}\frac{i\pi}{m(m+1)}-2\sum_{i=1}^{m}\frac{i^3\pi^3}{6m^3(m+1)^3}-1\nonumber\\
 &\geq & \pi-\frac{\pi^3}{12 m(m+1)}-1
 \label{eqn:observation_N(k)_lower_bound}
\end{eqnarray}
where the last equation follows from the fact that $\sum_{i=1}^m
i=m(m+1)/2$ and $\sum_{i=1}^m i^3=m^2(m+1)^2/4$. Now, since for all
$m\geq 1$, $m(m+1)\geq 2$ we get from Equation
(\ref{eqn:observation_N(k)_lower_bound}), $f(m(m+1))\geq
\pi-\frac{\pi^3}{24}-1 >0.$ This prove that $N(k)\geq m(m+1)$ for
$k=2m+1$, $m\geq 1$.
\hfill $\Box$ \end{proof}

Using this observation, we now prove the following bounds on $N(k)$.

\begin{theorem}
For every odd positive integer $k~(=2m+1)$, $m\geq 1$, we have $$\lceil \pi m(m+1)-\pi^2/4 \rceil \leq N(k) \leq \lfloor \pi m(m+1)+1\rfloor.$$
\label{th:th2}
\end{theorem}
\begin{proof}Let $C_n(k)$ be a regular polygonal chain of length $k$, where $k=2m+1, m\geq 1$.
From Theorem \ref{theorem} we have $N(k)=\min\{t \in \mathbb N:
2\sum_{i=1}^m\sin (i\pi/t)-1\leq 0\}$. Since $N(k)$ is the smallest
integer of this set, we must have,
$2\sum_{i=1}^m\sin(i\pi/(N(k)-1))-1> 0$. Observe that for every
$i\in\{1, 2, \ldots, m\}$, the function $\sin(i\pi/(N(k)-1))$ is
continuous and differentiable in the interval $(0, i\pi/(N(k)-1))$.
Hence, by the Mean Value Theorem, for every $i\in\{1, 2, \ldots,
m\}$ there exists $\theta^*_i\in(0, i\pi/(N(k)-1))$ such that
$$2\sum_{i=1}^m\sin\frac{i\pi}{N(k)-1}-1=2\sum_{i=1}^m\frac{i\pi}{N(k)-1}\cos\theta_i^*-1>0.$$
This implies, $N(k)<2\pi\sum_{i=1}^m i\cos\theta_i^*+1\leq 2\pi\sum_{i=1}^m i+1=\pi m(m+1)+1$. This proves that for
$m\geq 1$, we have $N(k) \leq \lfloor \pi m(m+1)+1 \rfloor$.

To prove the lower bound, observe that $2\sum_{i=1}^m\sin i\pi/N(k)-1\leq 0$. By the
Mean Value Theorem, for every $i\in\{1, 2, \ldots, m\}$ there
exists $\theta_i\in(0, i\pi/N(k))$ such that
$$2\sum_{i=1}^m\sin\frac{i\pi}{N(k)}-1=2\sum_{i=1}^m \frac{i\pi}{N(k)}\cos\theta_i-1\leq 0.$$
This implies that $N(k)\geq 2\pi\sum_{i=1}^m i\cos\theta_i$. Using the inequality $\cos\theta_i\geq 1-\theta_i^2/2$, for all $i\in\{1, 2, \ldots, m\}$, and the fact that $\theta_i\in(0, i\pi/N(k))$, we get $\cos\theta_i\geq 1-\frac{i^2\pi^2}{2(N(k))^2}$. Now, since from Observation \ref{obs:N(k)_2k}, we know that $N(k)\geq m(m+1)$, we get
$\cos\theta_i\geq 1-\frac{i^2\pi^2}{2m^2(m+1)^2}$, for all $i\in\{1, 2, \ldots, m\}$. Therefore,
\begin{eqnarray}
N(k)\geq 2\pi\sum_{i=1}^m i\cos\theta_i & \geq & 2\pi\sum_{i=1}^m i\left(1-\frac{i^2\pi^2}{2m^2(m+1)^2}\right)\nonumber\\
&= & \pi m(m+1)- \frac{\pi^2}{4}.
\label{eqn:theorem_N(k)_lower_bound}
\end{eqnarray}
This proves that for $m\geq 1$, $N(k) \geq \lceil \pi m(m+1)-\pi^2/4 \rceil$.

\hfill $\Box$ \end{proof}

Observe that using standard trigonometric formulae \cite{loney}, we get $f(n)=2\sum_{i=1}^m\sin(i\pi/n)-1=\frac{2\sin((m-1)\pi/(2n))\sin (m\pi/(2n))}{\sin(\pi/(2n))}-1$. Theorem \ref{th:th2} now immediately implies that the value of $N(k)$ can be computed by checking the sign of the function $f(n)$ at
maximum $\lceil \pi^2/4+1\rceil$ values of $n$. The minimum of value of $n$ at which the function $f$ is non-positive is the value of $N(k)$. Thus, we have the following theorem:

\begin{corollary}For every odd positive integer $k=2m+1~(m\geq 1)$, $N(k)$ can be computed in constant time. \hfill $\Box$
\label{corollary2}
\end{corollary}


\section{Extensions of Theorem \ref{theorem}}
\label{sec:extension}

In this section we show that Theorem \ref{theorem} can be extended
to a larger family of point sets, which have one axis of symmetry. A
set $S$ of $k=2m+1~(m\geq 1)$ points in the plane lying on the
circumference of a unit circle with center at $o$, is said to be
{\it reflection symmetric} if there exits a point $s_0\in S$ such
that, for every point $s_i\in S\backslash\{s_0\}$, in the open
halfplane containing $s_i$, there exists a point $s_i'\in
S\backslash\{s_0, s_j\}$, in the open halfplane not containing
$s_i$, which is the reflection of the point $s_i$ about the line
$os_0$. The point $s_0$ will be called the {\it pivot} of $S$ and
the line $os_0$ the {\it line of symmetry} of $S$. It is easy to see
that a set of points lying on the vertices of a regular polygonal
chain is reflection symmetric.

As any reflection symmetric point set is symmetric about its line of symmetry, the Fermat-Weber point of such a point set must lie on the line of symmetry. For a reflection symmetric point set $S$, with pivot at the point $s_0$, associate the rectangular coordinate system with origin at $o$ and the horizontal axis along the line $os_0$.
In this coordinate system, the coordinate of the point $s_0$ is $(1, 0)$. Denote the points in $S$, as $s_m, \ldots, s_{2}, s_1, s_0, s_1', s_2' \ldots, s_m'$, with the points taken in the clockwise order, such that the point
$s_i'$ is the reflection of the point $s_i$ about the line $os_0$, for each $i\in\{1, 2, \ldots, m\}$ (Figure \ref{fig3}(a)). Let the coordinate of the point $s_i$ be $(\cos\theta_i, \sin\theta_i)$, where $0<\theta_i<\pi$. This implies that the coordinate of its reflection $s_i'$ is $(\cos\theta_i, -\sin\theta_i)$.

Let $p:=(x, 0)$ be any point on the $x$-axis. Now, for $i\in\{1, 2,
\ldots, m\}$, $d(s_i, p)=d(s_i', p)$, and the sum of distances of
the elements in $S$ from a point $p$ is,

\begin{equation}
\psi_S(x)=2\sum_{i=1}^md(s_i, p)+d(s_0, p)=2\sum_{i=1}^m\sqrt{x^2-2x\cos\theta_i +1}+(1-x).
\label{eq:general}
\end{equation}

Let $S$ be a reflection symmetric point set with pivot at the point $s_0$, and $\theta_1, \theta_2, \ldots, \theta_m$ be as described above. Such a set $S$ is said to satisfy {\it Condition A} if $2\sum_{i=1}^m\sin (\theta_i/2)-1\leq 0$. Now, it can be easily seen by following the proof of Lemma \ref{lemma}, that the Fermat-Weber point of $S$ coincides with the pivot $s_0$ if and only if the value of the function $\psi_S'(x)=\frac{d}{dx}\psi_S(x)$ at $x=1$ is negative. Since $\psi_S'(1)=2\sum_{i=1}^m \sin(\theta_i/2)-1$, we have the following theorem,

\begin{theorem}
For every odd positive integer $k=2m+1~(m\geq 1)$, the Fermat-Weber
point of a reflection symmetric point set $S$, with $|S|=k$,
coincides with the pivot of $S$, if and only if $S$ satisfies {\em
Condition A}. \hfill $\Box$ \label{th:rs}
\end{theorem}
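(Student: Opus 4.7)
The plan is to mimic the proof of Lemma \ref{lemma} with the regular-polygon angles $2i\pi/n$ replaced by the general angles $\theta_1,\ldots,\theta_m$ of a reflection-symmetric point set. First I would observe that $\bbW(S)$ must lie on the line $os_0$: the $k\geq 3$ points of $S$ lie on the unit circle and hence are not collinear, so Haldane's uniqueness theorem applies, and the reflection symmetry of $S$ across $os_0$ forces the unique minimizer onto this line. Parameterizing that line by $x$, with $o$ at $x=0$ and the pivot $s_0$ at $x=1$, the problem reduces to minimizing the one-variable function $\psi_S(x)$ from (\ref{eq:general}). Because $s_0$ is the unique rightmost point of $S$, the containment of $\bbW(S)$ in the convex hull of $S$ additionally forces the minimizer to satisfy $x\leq 1$.

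Next I would establish strict convexity of $\psi_S$ on $[0,1]$ by exactly the same double differentiation as in Observation \ref{obs:convex}, obtaining
\[
\psi_S''(x)=2\sum_{i=1}^m\frac{1-\cos^2\theta_i}{(x^2-2x\cos\theta_i+1)^{3/2}}>0,
\]
where positivity comes from $\theta_i\in(0,\pi)$. Consequently $\psi_S'$ is strictly increasing on $[0,1]$, so the restriction of $\psi_S$ to $[0,1]$ is minimized at the right endpoint $x=1$ if and only if $\psi_S'(1)\leq 0$; combined with the constraint $x\leq 1$ from the previous step, the same inequality characterizes the global minimizer.

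The remaining piece is a short trigonometric identification. Differentiating (\ref{eq:general}), evaluating at $x=1$, and using $1-\cos\theta_i=2\sin^2(\theta_i/2)$ together with the positivity of $\sin(\theta_i/2)$ on $(0,\pi/2)$, yields
\[
\psi_S'(1)=2\sum_{i=1}^m\frac{1-\cos\theta_i}{\sqrt{2-2\cos\theta_i}}-1=2\sum_{i=1}^m\sin(\theta_i/2)-1,
\]
which is precisely the quantity appearing in Condition A. I do not expect any real obstacle; the mildly delicate point is the boundary step that rules out a minimizer with $x>1$, but this is immediate from the convex-hull containment of $\bbW(S)$ together with $s_0$ being the unique rightmost vertex of that hull. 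The whole argument is a direct transcription of Lemma \ref{lemma} once $\lambda_i(n)=\cos 2i\pi/n$ is replaced by the general $\cos\theta_i$.
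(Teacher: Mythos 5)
Your proposal is correct and is essentially the paper's own argument: the paper proves Theorem \ref{th:rs} by observing that the reasoning of Lemma \ref{lemma} carries over verbatim once $\lambda_i(n)=\cos 2i\pi/n$ is replaced by $\cos\theta_i$, reducing everything to the sign of $\psi_S'(1)=2\sum_{i=1}^m\sin(\theta_i/2)-1$. Your additional remarks (uniqueness forcing the minimizer onto the symmetry axis, convexity via the second derivative, and ruling out $x>1$ via convex-hull containment) are exactly the details the paper leaves implicit, and your evaluation of $\psi_S'(1)$ is correct.
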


\begin{figure*}[h]
\centering
\begin{minipage}[c]{0.5\textwidth}
\centering
\includegraphics[width=2.65in]
    {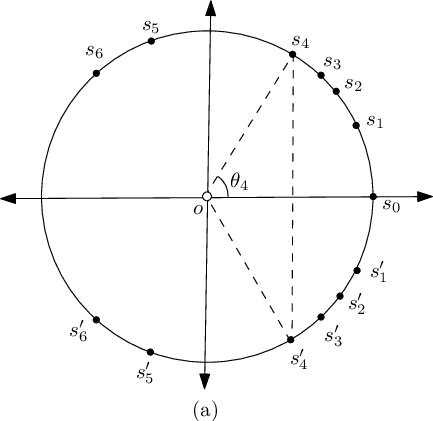}\\
\end{minipage}%
\begin{minipage}[c]{0.5\textwidth}
\centering
\includegraphics[width=3.21in]
    {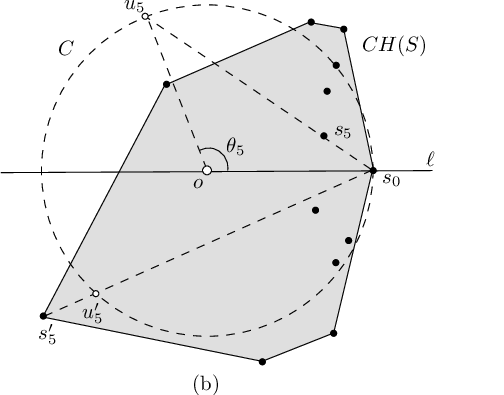}\\
\end{minipage}
\caption{(a) Reflection symmetric point set, (b) Illustration for
the recognition algorithm.} \label{fig3}
\end{figure*}

An interesting property and well-known property of the Fermat-Weber point of a set of non-collinear points is that the position of the Fermat-Weber point remains unchanged if the points in the set are moved along the rays joining them with the Fermat-Weber point of the set \cite{ginigalvani}. This fact can be applied to extend Theorem \ref{th:rs} to an even more general family of point set.

Let $S$ be a reflection symmetric point set with $|S|=k=2m+1~(m\geq 1)$ and $s_0\in S$ be the pivot. Define the {\it extension} of $S$ (to be denoted by $\mathcal A(S)$) as the set of all $k$ element point sets obtained by moving the points of $S\backslash\{s_0\}$ along the rays $\overrightarrow{s_is_0}$, for $s_i\in S\backslash\{s_0\}$.
It is clear that the point $s_0$ belongs to any set $T\in \mathcal A(S)$, and we call it the {\it pivot point} of the family $\mathcal A(S)$. The following result now follows immediately from Theorem \ref{th:rs} and the above discussion.

\begin{theorem}
Let $S$ be a reflection symmetric point set with $|S|=k~(k\geq 3)$
and pivot at the point $s_0$. The Fermat-Weber point of any set
$T\in\mathcal A(S)$ coincides with the pivot point $s_0$ if and only
if the Fermat-Weber point of $S$ coincides with $s_0$, that is, if
and only if $S$ satisfies {\em Condition A}. \hfill $\Box$
\label{th:extension}
\end{theorem}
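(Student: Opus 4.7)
The plan is to appeal directly to Theorem \ref{th:rs} together with the invariance property cited in the paragraph immediately preceding the statement: the Fermat-Weber point of a non-collinear set is unchanged when the fixed points are moved along the rays joining them to the current Fermat-Weber point. The argument then splits cleanly into the two implications, with the $(\Leftarrow)$ direction being the substantive one.

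For the direction $(\Rightarrow)$, observe that $S$ itself belongs to $\mathcal A(S)$ via the trivial (identity) movement: each $s_i \in S \setminus \{s_0\}$ is allowed to stay at $\tau_i = 0$ on its own ray. Specialising the hypothesis to $T = S$ therefore yields $\bbW(S) = s_0$, and Theorem \ref{th:rs} delivers Condition A.

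For the direction $(\Leftarrow)$, assume Condition A, so Theorem \ref{th:rs} gives $\bbW(S) = s_0$. Any $T \in \mathcal A(S)$ arises from $S$ by sliding each $s_i \in S \setminus \{s_0\}$ along the ray $\overrightarrow{s_i s_0}$ to some $t_i = s_i + \tau_i(s_0 - s_i)$ with $\tau_i \in [0, 1)$. For such $\tau_i$ the point $t_i$ lies on the same side of $s_0$ as $s_i$, so the direction from $\bbW(S) = s_0$ to the fixed point in question is preserved. Apply the invariance property one point at a time: after sliding $s_1$ to $t_1$, the new configuration still has $s_0$ as its Fermat-Weber point, so the invariance hypothesis is again satisfied for the next slide $s_2 \mapsto t_2$, and so on. After $2m$ steps the configuration becomes $T$, giving $\bbW(T) = s_0$.

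The main obstacle is that the classical invariance property is usually formulated in the smooth regime, where the Fermat-Weber point $w$ does not coincide with any fixed point and the optimality condition reads $\sum_j (w - p_j)/\|w - p_j\| = 0$. Here $\bbW(S) = s_0 \in S$, the objective is non-smooth at $s_0$, and one must work with the subgradient condition ``$s_0$ is optimal iff $\|\sum_{p_j \neq s_0} (s_0 - p_j)/\|s_0 - p_j\|\| \le 1$''. Since each slide with $\tau_i \in [0,1)$ leaves the unit vector $(s_0 - s_i)/\|s_0 - s_i\|$ unchanged, the vector in this subgradient condition is preserved, so the invariance is valid in the present non-smooth setting as well. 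One also needs the implicit restriction $\tau_i < 1$: pushing $t_i$ across $s_0$ would flip its unit vector and could destroy the optimality at $s_0$, so the ray $\overrightarrow{s_i s_0}$ in the paper's definition must be read as the segment from $s_i$ towards $s_0$ not reaching or crossing $s_0$.
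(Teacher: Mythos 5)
Your argument is correct and follows the same route as the paper, which offers no separate proof and derives the statement directly from Theorem \ref{th:rs} together with the ray-invariance property quoted just before the definition of $\mathcal A(S)$. The extra care you supply—writing optimality at $s_0$ as the subgradient condition $\bigl\|\sum_{i}(s_0-s_i)/\|s_0-s_i\|\bigr\|\le 1$, which depends only on the unit vectors and is therefore unchanged by the slides, and observing that the slides must not reach or cross $s_0$ (otherwise a flipped unit vector can violate the condition)—is exactly the justification the paper leaves implicit.
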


We now present a simple algorithm which recognizes whether a given
point set belongs to the extension of a reflection symmetric point
set. If it does, the algorithm also determines whether the
Fermat-Weber point of such a set coincides with its pivot, by
verifying {\it Condition A}.

Given a point set $Z$, we denote by $CH(Z)$ the {\it convex hull} of $Z$, and by $|CH(Z)|$ the number of vertices of $Z$ in $CH(Z)$.

\begin{theorem}Given a point set $T$, with $|T|=k=2m+1~(m\geq 1)$ and $|CH(T)|=h$, there exists an $O(hk\log k)$ time algorithm which determines whether $T$ belongs to the extension of some reflection symmetric point set $S$, and whether the Fermat-Weber point of $T$ coincides with the pivot point of $\mathcal A(S)$.
\label{th:algorithm}
\end{theorem}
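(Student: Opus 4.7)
The plan is to iterate over candidate pivots, using the convex hull to restrict the candidates, and then verify Condition~A directly for each successful candidate. The first structural observation is that any pivot $s_0$ of a reflection symmetric set $S$ with $T \in \mathcal A(S)$ must be a vertex of $CH(T)$. Indeed, in $S$ the pivot lies on the unit circle, so the tangent line to the circle at $s_0$ separates $s_0$ strictly from the other points of $S$. Each $t_i \in T \setminus \{s_0\}$ lies on the ray from $s_0$ through the corresponding $s_i$ (this is the content of the Fermat--Weber invariance property cited just before the theorem), so $t_i$ remains on the same side of the tangent as $s_i$; consequently $s_0$ is still a vertex of $CH(T)$. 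Hence there are only $h$ candidate pivots to try.

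First I would compute $CH(T)$ in $O(k \log k)$ time. Then, for each candidate pivot $v \in CH(T)$, I process it as follows. Because $v$ lies on the convex hull, all the remaining $k-1 = 2m$ points of $T$ lie in a closed half-plane bounded by a supporting line at $v$, so their polar angles measured from $v$ lie in a real interval of length at most $\pi$. Compute and sort these angles in $O(k \log k)$ time to get $\alpha_1 \le \alpha_2 \le \cdots \le \alpha_{2m}$. The set $T$ is a valid extension with pivot $v$ exactly when (i) the sorted angles pair up symmetrically about some axis direction $\phi$, meaning $\alpha_i + \alpha_{2m+1-i} = 2\phi$ for every $i \in \{1, \ldots, m\}$, and (ii) within each such pair the two points are equidistant from $v$. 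The only possible candidate for $\phi$ is $(\alpha_1 + \alpha_{2m})/2$, and both (i) and (ii) can be verified by a single linear scan in $O(k)$ time.

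If $v$ passes the symmetry check, then $T \in \mathcal A(S)$, where $S$ is the reflection symmetric set obtained by retracting each $t_i$ along its ray onto the unit circle centred at the point on the axis at distance $1$ from $v$. By Theorem \ref{th:extension}, $\bbW(T)=v$ if and only if $S$ satisfies Condition~A. Rewriting Condition~A directly in the coordinates of $T$ (with $v$ at the origin and the positive horizontal axis along $\phi$), it becomes the classical Fermat--Weber optimality inequality at $v$,
\[
\Bigl\| \sum_{t \in T \setminus \{v\}} \frac{t - v}{\|t - v\|} \Bigr\| \le 1,
\]
which takes $O(k)$ time to evaluate. Repeating over the $h$ hull vertices gives the claimed $O(hk \log k)$ bound.

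The main obstacle is really the first step: establishing that only hull vertices need to be examined, so that exactly $h$ candidates suffice. Once this is in place, the rest of the algorithm is a sort-and-scan per candidate together with a single subgradient check at the candidate. A minor subtlety is that highly symmetric inputs (for instance a regular polygonal chain as in Section~\ref{wpc}) may allow more than one hull vertex to pass the reflection test; any such vertex can be taken as the pivot, and the Condition~A check produces the same final answer in all such cases.
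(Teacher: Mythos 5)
Your overall architecture matches the paper's: restrict candidate pivots to the $h$ hull vertices, radially sort the remaining points about each candidate, test for an axis of angular symmetry in $O(k)$ per candidate, and finish with an $O(k)$ evaluation of Condition~A, for $O(hk\log k)$ total. Your tangent-line justification that the pivot must be a vertex of $CH(T)$ is in fact more careful than the paper's (which merely asserts it), and your reformulation of Condition~A as the subgradient inequality $\bigl\|\sum_{t\neq v}(t-v)/\|t-v\|\bigr\|\leq 1$ at $v$ is equivalent to the paper's check (the horizontal component of the unit chord vector from $s_0$ to $s_i$ is exactly $-\sin(\theta_i/2)$, and the vertical components cancel by symmetry) and lets you avoid explicitly reconstructing $S$.

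However, your test (ii) --- that the two points of each angular pair be equidistant from $v$ --- is a genuine error. The extension $\mathcal A(S)$ is obtained by moving each point of $S\setminus\{s_0\}$ \emph{independently} along its own ray through $s_0$; nothing ties the displacement of $s_i$ to that of its mirror image $s_i'$. Hence a set $T\in\mathcal A(S)$ need only have its ray \emph{directions} from the pivot paired symmetrically about a common axis, while the distances $d(v,t_i)$ and $d(v,t_i')$ may be arbitrary and unequal. (For instance, with $k=3$, push $s_1$ far out along its ray and leave $s_1'$ fixed: the resulting set is in $\mathcal A(S)$, yet fails your equidistance test at the true pivot and in general at every other hull vertex, so your algorithm answers ``no'' on a yes-instance.) The paper's characterization accordingly checks only that the $m$ angle bisectors of the angles $\angle t_i v t_i'$ coincide --- your condition (i) alone --- and recovers a witness $S$ by projecting each $t_i$ along its ray onto a unit circle through $v$ centred on the common bisector, which requires no equidistance. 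Dropping condition (ii) and keeping the rest of your argument yields the paper's algorithm and the claimed bound.
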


\begin{proof}Observe that if $T$ belongs to the extension of some set $S$, then the pivot point of the family $\mathcal A(S)$ lies on the convex hull of $S$. Further, a point $s_0$ on the convex hull of $T$ will be the pivot of some reflection symmetric point set if and only if the angle bisectors of the $m$ angles $\angle s_is_0s_i'$, $i=1, 2, \ldots, m$ coincide, where $s_1, s_2, \ldots, s_m, s_m', \ldots, s_2', s_1'$ are the points of $T\backslash\{s_0\}$ ordered by radially sorting them about $s_0$ in the counterclockwise direction.

We begin by finding out the convex hull of $S$ which requires $O(k\log h)$ time \cite{chan,kirkpatrick_seidel}.
For every point in $CH(T)$, we radially sort the remaining points of $T$ about that point, and check whether the angle bisector of the $m$ angles described above coincide. The radial sorting step requires $O(k\log k)$ time after which the $m$ bisectors can be checked in $O(k)$ time. Since this has to be done for all the vertices of the convex hull of $T$, the total running time of the algorithm is $O(hk\log k)$.

Let $S$ be the reflection symmetric point set such that $T\in
\mathcal A(S)$. The set $S$ can now be constructed from $T$ in
$O(k)$ time as follows: The point $s_0$ identified above will
clearly be the pivot point of $\mathcal A(S)$. Let $l$ be the common
angle bisector of the $m$ angles described above (Figure
\ref{fig3}(b)).  Construct the unit circle $C$ with center $o$ on
the line $\ell$ and passing through the the point $s_0$. For every
point $s_i\in T\backslash\{s_0\}$, let $u_i$ be the point where the
ray $\overrightarrow{s_is_0}$ intersects the circumference of $C$
(Figure \ref{fig2}(b)). (Note that if some point $s_i\in T$ lies on
the circumference of $C$, then $u_i=s_i$.) If $S_0=\{u_i|s_i\in
S\backslash\{s_0\}\}$, then $S=S_0\cup\{s_0\}$. {\it Condition A}
for the point set $S$ can now be checked in $O(k)$ time and the
result follows.
\hfill $\Box$ \end{proof}



\section{Conclusions}
\label{conclusions}
In this paper, we have explored the geometric properties of the Fermat-Weber point of polygonal chains.
From the uniqueness of the Fermat-Weber point it is known that the Fermat-Weber point of a regular $n$-gon coincides with
its circumcenter. However, when some vertices of the regular polygon are missing, the Fermat-Weber point can no longer be predicted exactly.
Here, we show that Fermat-Weber point of polygonal chains, which are obtained by deleting a set of consecutive vertices
of a regular polygon, can be predicted exactly in some situations. We show that for every odd positive integer
$k$, there exists a smallest integer $N(k)$ such that for all $n\geq N(k)$, $\bbW(C_n(k))$ coincides with the root vertex of $C_n(k)$.
This interesting geometric result can be thought of as an extension of the complementary problem of Courant and Robbins'. We also extend our results to more a general class point set and give a simple $O(hk\log k)$
time algorithm for identifying whether a given set of $k$ points, with $|CH(S)|=h$, belongs to such a class.

It may be interesting to find generalizations of this result to higher dimensions and to more general distance functions. However, as mentioned by Plastria \cite{plastria}, there does not seem to be much hope in geometrically predicting the Fermat-Weber point unless the given point set has a highly symmetric arrangement.\\

\noindent{\bf Acknowledgement.} The author wishes to thank Prof.
Probal Chaudhuri of Indian Statistical Institute, Kolkata for his
insightful suggestions and for his inspirational guidance. The
constructive comments of the anonymous reviewer are also thankfully
acknowledged. The author also thanks Wolfram Research for inviting the work
to be posted as a live demo in the \href{http://demonstrations.wolfram.com/FermatWeberPointOfAPolygonalChain/}{Wolfram Demonstration Project}.

\bibliographystyle{IEEEbib}
\bibliography{strings,refs,manuals}

\end{document}